\theoremstyle{plain}
\newtheorem{theorem}[equation]{Theorem}
\newtheorem{lemma}[equation]{Lemma}
\newtheorem{corollary}[equation]{Corollary}
\theoremstyle{remark}
\numberwithin{equation}{section}
\newcommand{\sjump}{\hskip .2 cm}
\newcommand{\dbar}{\bar \partial}
\begin{document}

\title[Holomorphic $L^{1}$-functions]{Integrating holomorphic $L^{1}$-functions}
\author{A.-K. Herbig}
\thanks{Research supported by the Austrian Science Fund FWF grant  V187N13.}
\address{Department of Mathematics, \newline University of Vienna, Vienna, Austria}
\email{anne-katrin.herbig@univie.ac.at}
\subjclass[2010]{32A25, 32A40}
\begin{abstract}
  Let $\Omega\Subset\mathbb{C}^{n}$ be a domain with smooth boundary, $k\in\mathbb{N}$. It is shown that the integral of a holomorphic function in $L^{1}(\Omega)$ may be represented as the integral of this function  against a smooth function vanishing to order $k-1$ on $b\Omega$.  An application for a smoothing property of the Bergman projection for conjugate holomorphic functions is given.
\end{abstract}
\maketitle

\section{Introduction}
Let $\Omega\Subset\mathbb{C}^{n}$, $n\geq 1$, be a bounded domain with smooth boundary. Write $\mathcal{O}(\Omega)$ for the space of holomorphic functions on $\Omega$, and denote  by $\mathcal{C}^{\infty}(\overline{\Omega})$ the space of functions which are smooth up to the boundary, $b\Omega$, of $\Omega$. The Bergman projection
\begin{align*}
  B: L^{2}(\Omega)\longrightarrow L^{2}(\Omega)\cap\mathcal{O}(\Omega)
\end{align*} 
is the orthogonal projection of $L^{2}(\Omega)$, the space of square-integrable functions on $\Omega$, onto its closed subspace of holomorphic functions.

In \cite{Bell}, Section 6,  Bell constructed for each $k\in\mathbb{N}$ a differential operator $\Phi^{k}$ of order $k$, with coefficients in $\mathcal{C}^{\infty}(\overline{\Omega})$, satisfying
\begin{itemize}
  \item[(i)] $B(\Phi^{k}f)=Bf$ for all $f\in\mathcal{C}^{\infty}(\overline{\Omega})$,
  \item[(ii)]  $\Phi^{k}f$ vanishes to order $k-1$ on $b\Omega$ whenever $f\in\mathcal{C}^{\infty}(\overline{\Omega})$.
\end{itemize}
Since $\Omega$ is bounded, $\mathcal{C}^{\infty}(\overline{\Omega})\subset L^{2}(\Omega)\subset L^{1}(\Omega)$ holds. Hence it may be deduced that for $\eta\in\mathcal{C}^{\infty}(\overline{\Omega})\cap\mathcal{O}(\Omega)$
\begin{align*}
  \int_{\Omega}\eta \,dV= \int_{\Omega}B\eta \,dV\stackrel{\text{(i)}}{=} \int_{\Omega}B(\Phi^{k}\eta) \,dV= \int_{\Omega}\bigl(\Phi^{k}\eta\bigr)\cdot \overline{B(1)} \,dV
\end{align*}
holds. Here, in the  last step, the self-adjointness of $B$ (with respect to the Hermitian $L^{2}$-inner product) was used. Since $B(1)=1$ it follows that
\begin{align}\label{E:Bellnew}
  \int_{\Omega}\eta\,dV=\int_{\Omega}\Phi^{k}\eta\,dV\sjump\qquad\forall\sjump 
  \eta\in\mathcal{C}^{\infty}(\overline{\Omega})\cap\mathcal{O}(\Omega).
\end{align}

The purpose of this note is to extend the integral identity \eqref{E:Bellnew} to a wider class of functions while replacing the differential operator $\Phi^{k}$  on its right hand side by multiplication with a smooth function which vanishes to order $k-1$ on $b\Omega$. 

\begin{theorem}\label{T:main}
  Let $\Omega\Subset\mathbb{C}^{n}$, $n\geq 1$, be a smoothly bounded domain.  
  Let $\delta\in\mathcal{C}^{\infty}(\overline{\Omega})$ be a function which equals the distance-to-the-boundary function for 
  $\Omega$  near $b\Omega$.
  Let  $k\in\mathbb{N}$ and $g\in\mathcal{C}^{\infty}(\overline{\Omega})$ be given. Then there exists  a function $\omega_{k,g}\in\mathcal{C}^{\infty}(\overline{\Omega})$ such that
  \begin{align}\label{E:main}
    \int_{\Omega} \eta\cdot g\,dV=\int_{\Omega}\delta^{k}\cdot\omega_{k,g}\cdot\eta \,dV\qquad\sjump\forall\sjump\eta\in 
    L^{1}(\Omega)\cap \mathcal{O}(\Omega).
  \end{align}
\end{theorem}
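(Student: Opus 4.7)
The plan is to construct $\omega_{k,g}$ explicitly by an iterated integration-by-parts procedure based on a $(0,1)$-vector field transverse to $b\Omega$, first establishing \eqref{E:main} for $\eta\in\mathcal{C}^{\infty}(\overline{\Omega})\cap\mathcal{O}(\Omega)$, then extending it to $L^{1}(\Omega)\cap\mathcal{O}(\Omega)$ via a Stokes-type limit on the interior domains $\Omega_{\epsilon}:=\{\delta>\epsilon\}$.

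Since $d\delta\neq 0$ on $b\Omega$, I can choose smooth coefficients $b_{1},\ldots,b_{n}\in\mathcal{C}^{\infty}(\overline{\Omega})$ so that the $(0,1)$-vector field $V:=\sum_{j}b_{j}\,\partial/\partial\bar z_{j}$ satisfies $V\delta\equiv 1$ on some neighbourhood $U$ of $b\Omega$. I fix a cutoff $\chi\in\mathcal{C}^{\infty}_{c}(U)$ with $\chi\equiv 1$ near $b\Omega$, put $\beta:=\sum_{j}\partial b_{j}/\partial\bar z_{j}\in\mathcal{C}^{\infty}(\overline{\Omega})$, and define $L(F):=-V(F)-\beta\,F$, a first-order operator on $\mathcal{C}^{\infty}(\overline{\Omega})$. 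My candidate is
\begin{align*}
  \omega_{k,g}:=\frac{1}{k!}\,L^{k}(\chi g)+\frac{(1-\chi)\,g}{\delta^{k}},
\end{align*}
both summands being smooth on $\overline{\Omega}$: the first because $L$ preserves $\mathcal{C}^{\infty}(\overline{\Omega})$, the second because $(1-\chi)g$ is supported where $\delta$ is bounded below.

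For $\eta\in\mathcal{C}^{\infty}(\overline{\Omega})\cap\mathcal{O}(\Omega)$ and $h\in\mathcal{C}^{\infty}(\overline{\Omega})$ with $h|_{b\Omega}=0$, integration by parts in the $\bar z_{j}$-directions, combined with $V\eta=0$, produces $\int_{\Omega}\eta\,V(h)\,dV=-\int_{\Omega}\eta\,\beta\,h\,dV$. Taking $h=\delta^{j+1}F$ for $F$ supported in $U$, together with the pointwise identity $\delta^{j}F=\tfrac{1}{j+1}[V(\delta^{j+1}F)-\delta^{j+1}V(F)]$ (valid where $V\delta=1$), yields the one-step reduction
\begin{align*}
  \int_{\Omega}\eta\,\delta^{j}F\,dV=\tfrac{1}{j+1}\int_{\Omega}\eta\,\delta^{j+1}L(F)\,dV.
\end{align*}
Iterating $k$ times starting from $F=\chi g$ -- which is permitted since $L$ preserves $\mathrm{supp}\,\chi\subset U$ -- gives $\int_{\Omega}\eta\,\chi g\,dV=\tfrac{1}{k!}\int_{\Omega}\eta\,\delta^{k}L^{k}(\chi g)\,dV$, and adding the trivial identity for $(1-\chi)g$ establishes \eqref{E:main} on $\mathcal{C}^{\infty}(\overline{\Omega})\cap\mathcal{O}(\Omega)$.

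To pass to $\eta\in L^{1}(\Omega)\cap\mathcal{O}(\Omega)$, I will telescope the iteration so as to express $(g-\delta^{k}\omega_{k,g})\,dV=\dbar\Gamma$ for an $(n,n-1)$-form $\Gamma\in\mathcal{C}^{\infty}(\overline{\Omega})$ built from $V$ and the intermediate iterates $\delta^{j+1}L^{j}(\chi g)$, whose coefficients vanish on $b\Omega$. Stokes' theorem on $\Omega_{\epsilon}$ gives $\int_{\Omega_{\epsilon}}\eta\,\dbar\Gamma=\int_{b\Omega_{\epsilon}}\eta\,\Gamma$; the left side converges to $\int_{\Omega}\eta\,\dbar\Gamma$ by dominated convergence, and the right side is bounded by $C\epsilon\int_{b\Omega_{\epsilon}}|\eta|\,d\sigma$ because $|\Gamma|\leq C\delta$ near $b\Omega$. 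The coarea formula gives $s\mapsto\int_{b\Omega_{s}}|\eta|\,d\sigma\in L^{1}(0,c)$, which forces $\liminf_{\epsilon\to 0^{+}}\epsilon\int_{b\Omega_{\epsilon}}|\eta|\,d\sigma=0$; along a realising subsequence the boundary integral vanishes, and $\int_{\Omega}\eta(g-\delta^{k}\omega_{k,g})\,dV=0$ follows. This extension is the \emph{main obstacle}: one must produce the explicit primitive $\Gamma$ from the iteration and justify the Stokes-type limit without any a priori boundary regularity on $\eta$ beyond $L^{1}$-integrability.
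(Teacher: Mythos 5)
Your proposal is correct and, at its core, is the same argument as the paper's: integrate by parts against a vector field transverse to $b\Omega$ normalized so that it differentiates $\delta$ to $1$, use holomorphicity of $\eta$ to dispose of the derivative that lands on $\eta$, cut off away from the boundary, and justify the boundary term on $\Omega_{\epsilon}$ as $\epsilon\to 0^{+}$. Indeed your $(0,1)$-field $V=\sum_{j}b_{j}\partial/\partial\bar z_{j}$ with $V\delta=1$ is (up to normalization) exactly the paper's combination $N-iT=4\sum_{j}\delta_{z_{j}}\partial/\partial\bar z_{j}$; by working with $V$ from the start you get $V\eta=0$ in one stroke, whereas the paper first integrates by parts with the real normal field $N$ and then converts $N\eta$ into the tangential derivative $iT\eta$ in two separate passes. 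Your closed-form $\omega_{k,g}=\frac{1}{k!}L^{k}(\chi g)+(1-\chi)g/\delta^{k}$ replaces the paper's induction on $k$, which is a cosmetic difference. The one genuinely different ingredient is the treatment of the boundary term: the paper shows the Stokes boundary integral equals $\epsilon$ times a quantity that is uniformly bounded in $\epsilon$ (by pushing the boundary integral back into $\Omega_{\epsilon}$ with a second application of Stokes and using $\eta\in L^{1}$), while you bound it by $C\epsilon\int_{b\Omega_{\epsilon}}|\eta|\,d\sigma$ and invoke the coarea formula plus a $\liminf$ along a subsequence. Both work; the paper's version gives convergence of the boundary term to zero for every $\epsilon\to 0^{+}$, while yours only produces a realising subsequence, which suffices since the interior integrals converge by dominated convergence. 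Your argument is complete modulo writing out the telescoped primitive $\Gamma$, which is routine given your one-step identity.
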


In the special case when $\eta\in L^{2}(\Omega)\cap\mathcal{O}(\Omega)$ and $g\equiv 1$, identity \eqref{E:main} may be derived directly from \eqref{E:Bellnew} by an integration by parts argument. E.g., when $k=1$, 
one first notices that Bell's operator $\Phi^{1}$ is of the form $r\widetilde{\Phi}^{1}$ where $r\in\mathcal{C}^{\infty}(\overline{\Omega})$ is zero on $b\Omega$ and $\widetilde{\Phi}^{1}$ is a first order differential operator with coefficients in
 $\mathcal{C}^{\infty}(\overline{\Omega})$. It then follows from the ellipticity of the $\dbar$-operator on functions that $\widetilde{\Phi}^{1}\eta$ may be written as a tangential derivative of $\eta$ (plus a lower order term) so that integrating by parts yields a vanishing boundary term (see Lemma 6.1 in  \cite{Boas} for details for this kind of reasoning). 
 This argument lets one rewrite the right hand side  of \eqref{E:Bellnew} in the shape of the right hand side of \eqref{E:main}.

\medskip

Theorem \ref{T:main} allows us to reprove, and slightly improve,  a smoothing property of the Bergman projection  for conjugate holomorphic functions in $L^{2}(\Omega)$ previously proved as Theorem 1.10 in \cite{HMS}. To state this properly, let us fix some notation first. For given $\ell\in\mathbb{Z}$ denote by $H^{\ell}(\Omega)$ the $L^{2}$-Sobolev  space of order $\ell$, write $\|.\|_{\ell}$ for its norm, see Subsection \ref{SS:Settingagain} for more details. Furthermore, set $A^{0}(\Omega)=L^{2}(\Omega)\cap\mathcal{O}(\Omega)$, and define $\overline{A^{0}(\Omega)}$ to be the space of  functions  whose complex conjugates belong to $A^{0}(\Omega)$.
\begin{corollary}\label{C:smoothing}
  Let $\Omega\Subset\mathbb{C}^{n}$, $n\geq 1$, be a smoothly bounded domain. Suppose that for a given pair $(k_{1},k_{2})\in
  \mathbb{N}_{0}^{2}$ there exists a constant $C>0$ such that
  \begin{align}\label{E:regassumption}
    \left\|Bf\right\|_{k_{2}}\leq C \|f\|_{k_{1}}\sjump\qquad\sjump\forall \sjump f\in H^{k_{2}}(\Omega).
  \end{align}
  Let $k\in\mathbb{N}$ and $g\in\mathcal{C}^{\infty}(\overline{\Omega})$ be given. Then there exists a constant $\widetilde{C}>0$ such that
  \begin{align}\label{E:smoothing}
    \left\|B(\mu g)\right\|_{k_{2}}\leq \widetilde{C}\|\mu\|_{-k}\sjump\qquad\forall\sjump\mu\in\overline{A^{0}(\Omega)}.
  \end{align}
\end{corollary}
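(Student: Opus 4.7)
The strategy is: use Theorem~\ref{T:main} to rewrite $B(\mu g)$ as $B$ applied to an input carrying a high-order boundary-vanishing factor $\delta^K$; invoke the regularity hypothesis \eqref{E:regassumption}; and estimate the resulting $H^{k_1}$-norm in terms of $\|\mu\|_{-k}$ via classical weighted $L^2$-estimates for conjugate holomorphic functions. For $K\in\mathbb N$ to be chosen large below, let $\tilde\omega:=\overline{\omega_{K,\bar g}}\in\mathcal C^\infty(\overline\Omega)$, where $\omega_{K,\bar g}$ is provided by Theorem~\ref{T:main} with the smooth weight $\bar g$. For any $\phi\in A^0(\Omega)$, the product $\bar\mu\phi$ lies in $L^1(\Omega)\cap\mathcal O(\Omega)$ (holomorphy since $\bar\mu\in\mathcal O(\Omega)$, $L^1$-membership by Cauchy--Schwarz), and Theorem~\ref{T:main} gives
\begin{align*}
  \int_\Omega \bar g\,\bar\mu\,\phi\,dV=\int_\Omega\delta^K\,\omega_{K,\bar g}\,\bar\mu\,\phi\,dV.
\end{align*}
Complex conjugation yields $\langle\mu g,\phi\rangle=\langle\delta^K\tilde\omega\mu,\phi\rangle$ for every $\phi\in A^0(\Omega)$. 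By self-adjointness of $B$ and $B\phi=\phi$, this means $B(\mu g)-B(\delta^K\tilde\omega\mu)$ is orthogonal to $A^0(\Omega)$, and as it lies in $A^0(\Omega)$ as well, one obtains the key identity
\begin{align*}
  B(\mu g)=B(\delta^K\tilde\omega\mu)\quad\text{in }L^2(\Omega).
\end{align*}

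Choosing $K:=k+k_1+k_2$, I would next verify $\delta^K\tilde\omega\mu\in H^{k_2}(\Omega)$ so that \eqref{E:regassumption} yields $\|B(\mu g)\|_{k_2}\leq C\|\delta^K\tilde\omega\mu\|_{k_1}$. The key tool for both the verification and the remaining estimate is the integrated Cauchy inequality for a conjugate holomorphic $v$ on $\Omega$: for integers $m\geq j\geq 0$,
\begin{align*}
  \int_\Omega\delta^{2m}|D^jv|^2\,dV\leq C_{m,j}\int_\Omega\delta^{2(m-j)}|v|^2\,dV,
\end{align*}
a consequence of the sub-mean value property on balls $B(z,\delta(z)/2)\subset\Omega$ combined with Fubini. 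Expanding $D^\alpha(\delta^K\tilde\omega\mu)$ by the Leibniz rule---using that $D^\beta(\delta^K\tilde\omega)$ vanishes to order $K-|\beta|$ on $b\Omega$ and that real derivatives of conjugate holomorphic functions remain conjugate holomorphic---and applying the above inequality term by term, one finds $\|D^\alpha(\delta^K\tilde\omega\mu)\|_{L^2}^2\leq C_\alpha\int_\Omega\delta^{2(K-|\alpha|)}|\mu|^2\,dV$. Summing over $|\alpha|\leq k_1$ and using $K-k_1\geq k$ gives
\begin{align*}
  \|\delta^K\tilde\omega\mu\|_{k_1}^2\leq C\int_\Omega\delta^{2k}|\mu|^2\,dV;
\end{align*}
the analogous computation for $|\alpha|\leq k_2$ confirms $\delta^K\tilde\omega\mu\in H^{k_2}(\Omega)$.

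The final ingredient is the classical weighted $L^2$-estimate $\int_\Omega\delta^{2k}|\mu|^2\,dV\leq\widetilde C\|\mu\|_{-k}^2$ for conjugate holomorphic $\mu$. It follows from the duality $H^{-k}(\Omega)=(H^k_0(\Omega))^*$ together with the observation that (again by the integrated Cauchy inequality applied with $v=\mu$, $m=2k$, plus a standard cutoff approximation) the function $\delta^{2k}\mu$ belongs to $H^k_0(\Omega)$ and satisfies $\|\delta^{2k}\mu\|_k\leq C\bigl(\int_\Omega\delta^{2k}|\mu|^2\,dV\bigr)^{1/2}$. Pairing then produces
\begin{align*}
  \int_\Omega\delta^{2k}|\mu|^2\,dV=\langle\mu,\delta^{2k}\mu\rangle\leq\|\mu\|_{-k}\,\|\delta^{2k}\mu\|_k\leq C\|\mu\|_{-k}\Bigl(\int_\Omega\delta^{2k}|\mu|^2\,dV\Bigr)^{1/2},
\end{align*}
which rearranges to the claim and, in combination with the previous two displays, yields \eqref{E:smoothing}. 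The principal obstacle is this final weighted estimate: it is exactly where the conjugate holomorphy of $\mu$ is essential, since interior regularity (rather than any direct boundary vanishing of $\mu$) is what allows the weak norm $\|\mu\|_{-k}$ to control $\|\delta^k\mu\|_{L^2(\Omega)}$.
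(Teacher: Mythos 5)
Your argument is correct, but it is organized differently from the paper's. The paper first invokes the duality statement \eqref{E:duality} (Proposition 2.3 of \cite{HMS}), which converts the hypothesis \eqref{E:regassumption} into the bound $\|B(\mu g)\|_{k_{2}}\leq c_{1}\sup\{|(\mu g,h)|:h\in A^{k_{2}}(\Omega),\ \|h\|_{-k_{1}}\leq 1\}$; it then applies Theorem~\ref{T:main} to $\eta=\overline{\mu}h$ with $k+k_{1}$ powers of $\delta$, splits them as $(\delta^{k}\omega_{k+k_{1},g}\mu,\delta^{k_{1}}h)$, and finishes with Cauchy--Schwarz and two applications of the known equivalence \eqref{E:equivnorm}. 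You instead use Theorem~\ref{T:main} (tested against all $\phi\in A^{0}(\Omega)$) to produce the identity $B(\mu g)=B(\delta^{K}\tilde\omega\mu)$ and then feed the concrete function $\delta^{K}\tilde\omega\mu$ directly into \eqref{E:regassumption}, controlling its Sobolev norms by hand via interior Cauchy estimates for conjugate holomorphic functions. Your route is more self-contained: it bypasses the duality proposition from \cite{HMS} entirely, and the intermediate identity $B(\mu g)=B(\delta^{K}\tilde\omega\mu)$ is a clean structural statement of independent interest. The cost is that you need a larger power $K=k+k_{1}+k_{2}$ of $\delta$, a Leibniz/weighted-Cauchy computation to bound $\|\delta^{K}\tilde\omega\mu\|_{k_{1}}$, and you end up reproving the right-hand inequality of \eqref{E:equivnorm} (your final weighted estimate $\int_{\Omega}\delta^{2k}|\mu|^{2}\,dV\lesssim\|\mu\|_{-k}^{2}$, including the claim $\delta^{2k}\mu\in H^{k}_{0}(\Omega)$), which the paper simply quotes as a known fact. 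Both arguments hinge on the same two ingredients --- Theorem~\ref{T:main} applied to $\overline{\mu}$ times a holomorphic test object, and the comparability of $\|\mu\|_{-k}$ with the weighted norm $\|\delta^{k}\mu\|$ --- so the difference is one of packaging rather than substance, but the packaging is genuinely distinct.
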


Corollary \ref{C:smoothing} has been shown to hold in the case $k=0$ in \cite{HMS}, Theorem 1.10. This case indicates that the Bergman projection maps the products of the form $\mu g$ to  particularly ``nice'' holomorphic $L^{2}$-functions. That \eqref{E:smoothing} holds for any $k\in\mathbb{N}$ seems to hint at $B$ acting on such a product  in a ``simple'' manner. To be less vague the introduction of  some more notation is convenient. Denote by $\overline{A^{-k}(\Omega)}$ the closure of $\overline{A^{0}(\Omega)}$ with respect to $\|.\|_{-k}$. Additionally, write $\overline{A_{g}^{-k}(\Omega)}$ for the space consisting of functions which are  products of $g$ and functions in  $\overline{A^{-k}(\Omega)}$.

\begin{corollary}\label{C:extending}
 Suppose the hypotheses of Corollary \ref{C:smoothing} hold. Then the Bergman projection extends to an $H^{k_{2}}(\Omega)$-bounded operator on $\overline{A_{g}^{-k}(\Omega)}$.
\end{corollary}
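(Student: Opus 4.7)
The plan is to deduce Corollary \ref{C:extending} from Corollary \ref{C:smoothing} by the bounded linear transformation (BLT) theorem. The estimate \eqref{E:smoothing} says precisely that the linear map
\[S_0:\bigl(\overline{A^0(\Omega)},\|\cdot\|_{-k}\bigr)\longrightarrow H^{k_2}(\Omega),\qquad S_0\mu:=B(\mu g),\]
is bounded with operator norm at most $\widetilde C$. Since $\overline{A^{-k}(\Omega)}$ is by definition the completion of $\overline{A^0(\Omega)}$ in $\|\cdot\|_{-k}$ and $H^{k_2}(\Omega)$ is complete, BLT yields a unique bounded extension $S:\overline{A^{-k}(\Omega)}\to H^{k_2}(\Omega)$ with the same norm bound.

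To recast $S$ as an operator on $\overline{A_g^{-k}(\Omega)}$, I would equip the latter space with the natural quotient norm
\[\|f\|_{A_g^{-k}}:=\inf\bigl\{\|\mu\|_{-k}:\mu\in\overline{A^{-k}(\Omega)},\ g\mu=f\bigr\},\]
which identifies $\overline{A_g^{-k}(\Omega)}$ isometrically with $\overline{A^{-k}(\Omega)}/\ker M_g$, where $M_g\mu:=g\mu$. Setting $\widetilde B(g\mu):=S\mu$ then produces a bounded operator $\widetilde B:\overline{A_g^{-k}(\Omega)}\to H^{k_2}(\Omega)$ of norm at most $\widetilde C$; since $\widetilde B$ agrees with $B$ on products $g\mu$ with $\mu\in\overline{A^0(\Omega)}$, the $L^2$-continuity of $B$ shows that $\widetilde B$ agrees with $B$ on the entire $L^2$-intersection with $\overline{A_g^{-k}(\Omega)}$, so $\widetilde B$ genuinely extends $B$.

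The crux is well-definedness: one must show $S$ vanishes on $\ker M_g$, i.e., $S\mu=0$ whenever $\mu\in\overline{A^{-k}(\Omega)}$ satisfies $g\mu=0$. To handle this I would mirror the proof of Corollary \ref{C:smoothing}: for $\mu_j\in\overline{A^0(\Omega)}$ with $\mu_j\to\mu$ in $\|\cdot\|_{-k}$ and for a suitable test function $\psi$ (chosen so that $B\psi\in\mathcal{C}^{\infty}(\overline{\Omega})$), the $L^2$-self-adjointness of $B$ combined with the complex-conjugated form of Theorem \ref{T:main} applied to $\overline{\mu_j}\in\mathcal{O}(\Omega)\cap L^1(\Omega)$ with $g$ replaced by $g\overline{B\psi}$ would rewrite
\[\langle S_0\mu_j,\psi\rangle=\int_\Omega \mu_j\, g\,\overline{B\psi}\,dV=\int_\Omega \mu_j\cdot\delta^k\widetilde\omega_\psi\,dV\]
for some $\widetilde\omega_\psi\in\mathcal{C}^{\infty}(\overline{\Omega})$. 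Because $\delta^k\widetilde\omega_\psi\in H_0^k(\Omega)$, the right-hand side is the $H^{-k}$--$H_0^k$ duality bracket, which passes to the limit and gives $\langle S\mu,\psi\rangle=\langle\mu,\delta^k\widetilde\omega_\psi\rangle$. Reading the identity backwards exhibits this quantity as the distributional pairing $\langle g\mu,\overline{B\psi}\rangle$, which vanishes by hypothesis; varying $\psi$ over a dense class in $H^{-k_2}(\Omega)$ then forces $S\mu=0$, completing the well-definedness check and hence the proof.
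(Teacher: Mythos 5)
Your main argument is the same as the paper's: the estimate \eqref{E:smoothing} of Corollary \ref{C:smoothing} makes $\mu\mapsto B(\mu g)$ a bounded map from $\bigl(\overline{A^{0}(\Omega)},\|\cdot\|_{-k}\bigr)$ into the complete space $H^{k_{2}}(\Omega)$, and since $\overline{A^{-k}(\Omega)}$ is by definition the closure of $\overline{A^{0}(\Omega)}$ in $\|\cdot\|_{-k}$, the operator extends by continuity with the same bound. The paper carries this out by hand with Cauchy sequences; your appeal to the BLT theorem is the identical argument.

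Where you diverge is in insisting that the extension descend to the quotient by $\ker M_{g}$, i.e.\ that $S\mu=0$ whenever $g\mu=0$. The paper does not address this point at all: it defines $\widetilde{B}(\mu g)$ as $\lim_{j}B(\mu_{j}g)$ for a chosen representative $\mu$ and measures boundedness against $\|\mu\|_{-k}$, so your quotient-norm formulation is a legitimate sharpening of what the corollary literally asserts, and your first paragraph already reproduces the paper's proof in full. The extra well-definedness step, however, has two gaps as sketched. First, you need a class of test functions $\psi$, rich enough to separate points of $H^{k_{2}}(\Omega)$, with $B\psi\in\mathcal{C}^{\infty}(\overline{\Omega})$; the only regularity available is \eqref{E:regassumption}, which gives $B:H^{k_{1}}(\Omega)\to H^{k_{2}}(\Omega)$ and nothing more, and on a general smoothly bounded domain $B$ need not map any dense class into $\mathcal{C}^{\infty}(\overline{\Omega})$. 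Second, even granting such $\psi$, the hypothesis $g\mu=0$ is an identity in $H^{-k}(\Omega)=\bigl(H_{0}^{k}(\Omega)\bigr)'$, i.e.\ it says $\langle\mu,\overline{g}\phi\rangle=0$ for all $\phi\in H_{0}^{k}(\Omega)$; to conclude $\langle\mu,\delta^{k}\widetilde{\omega}_{\psi}\rangle=0$ you would need $\delta^{k}\widetilde{\omega}_{\psi}$ to lie in the closure of $\overline{g}\,H_{0}^{k}(\Omega)$, and the step that "reads the identity backwards" pairs $g\mu$ against the non-compactly-supported function $\overline{B\psi}$, which is not licensed by that duality. If you drop the quotient interpretation and, like the paper, regard $\widetilde{B}$ simply as the continuous extension of $\mu\mapsto B(\mu g)$ to $\overline{A^{-k}(\Omega)}$, your argument is complete and matches the paper's.
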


The article is structured as follows. In Section \ref{S:proof} the proof of Theorem~\ref{T:main} is given.  The proofs of Corollaries \ref{C:smoothing} and \ref{C:extending} are provided in Section \ref{S:Corollaries}.

\subsection*{Acknowledgement} I would like to thank Jeff McNeal for pointing out to me that Corollary \ref{C:extending} holds and for his advice on the exposition of this article.

\medskip

\section{Proof of Theorem \ref{T:main}}\label{S:proof}
\subsection{Setting}\label{SS:Setting}
Before presenting the proof of Theorem \ref{T:main} let us give the basic definitions and notions used in this setting.

Let $\Omega\Subset\mathbb{C}^{n}$ be a domain with $\mathcal{C}^{\infty}$-boundary. Denote by $(z_{1},\ldots,z_{n})$ the standard coordinates of $\mathbb{C}^{n}$, write $x_{2j-1}=\text{Re}(z_{j})$ and 
 $x_{2j}=\text{Im}(z_{j})$ for $j\in\{1,\ldots,n\}$. Let $\|.\|$ be the Euclidean norm on $\mathbb{C}^{n}$.
The distance-to-the-boundary function, $d_{b\Omega}(z)$, for $\Omega$ is given by $\inf\{\|z-w\|:w\in b\Omega\}$
and satisfies the properties
\begin{itemize}
  \item[(a)] there exists a neighborhood $U$ of $b\Omega$ such that $d_{b\Omega}$ is smooth on $\overline{\Omega}\cap U$,
  \item[(b)] $\|\nabla d_{b\Omega}(z)\|=1$ for all $z\in\overline{\Omega}\cap U$.
\end{itemize}
 For proofs of  the facts (a) and (b), see Lemma 1, pg. 382, in \cite{GT}. 
 It follows from (a) that
  \begin{align}\label{E:normalvfield}
   N:=\sum_{j=1}^{2n}\frac{\partial d_{b\Omega}}{\partial x_{j}}\frac{\partial}{\partial x_{j}}
  \end{align}
 is a smooth vector field  on $\overline{\Omega}\cap U$. 
 Moreover, (b) implies that $N(d_{b\Omega})=1$ on $\overline{\Omega}\cap U$.
 
 As usual, for an open set  $U\subset\mathbb{C}^{n}$, let $\mathcal{C}^{\infty}_{c}(U)$ be the space of functions in $\mathcal{C}^{\infty}(\overline{U})$ which are compactly supported in $U$. Also,  $L^{1}(\Omega)$ is the space of measurable functions
 $f:\Omega\longrightarrow\mathbb{C}$ satisfying
 \begin{align*}
   \int_{\Omega}|f|\,dV<\infty,
 \end{align*}
 where $dV$ is the Euclidean volume form.
   
  \subsection{Base case}\label{SS:basecase}
   The proof of Theorem \ref{T:main} will be done by induction on $k$. The base case, $k=1$, is covered by the following lemma.
  \begin{lemma}\label{L:basecase}
    Let $\Omega\Subset\mathbb{C}^{n}$, $n\geq 1$, be a smoothly bounded domain. 
    Let $\delta\in\mathcal{C}^{\infty}(\overline{\Omega})$ be a function which equals $d_{b\Omega}$ near $b\Omega$.
    For any given $\gamma \in\mathcal{C}^{\infty}(\overline{\Omega})$ there exists a function
    $\omega_{1,\gamma}\in\mathcal{C}^{\infty}(\overline{\Omega})$ such that
    \begin{align}\label{E:basecase}
      \int_{\Omega}\eta\cdot\gamma\,dV=\int_{\Omega}\delta\cdot\omega_{1,\gamma}\cdot\eta\,dV\sjump\qquad\forall\sjump\eta\in L^{1}(\Omega)
      \cap\mathcal{O}(\Omega).
    \end{align}  
\end{lemma}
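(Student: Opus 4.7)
The plan is to reduce the integral identity to a pointwise algebraic decomposition of $\gamma$ together with an integration-by-parts identity tailored to merely $L^{1}$-integrable holomorphic functions. Concretely, I would find $v_{1},\ldots,v_{n},\omega\in\mathcal{C}^{\infty}(\overline{\Omega})$ satisfying
\[
  \gamma=\sum_{j=1}^{n}v_{j}\,\partial_{\bar z_{j}}\delta+\delta\,\omega,
\]
and then, for each $j$, trade $\int_{\Omega}\eta\,v_{j}\,\partial_{\bar z_{j}}\delta\,dV$ for $-\int_{\Omega}\eta\,\delta\,\partial_{\bar z_{j}}v_{j}\,dV$ by noting that $\partial_{\bar z_{j}}(\delta v_{j})=v_{j}\partial_{\bar z_{j}}\delta+\delta\partial_{\bar z_{j}}v_{j}$ and that, since $\eta$ is holomorphic and $\delta v_{j}$ vanishes on $b\Omega$, the integral of $\eta\cdot\partial_{\bar z_{j}}(\delta v_{j})$ over $\Omega$ should be zero. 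Summing over $j$ then yields $\omega_{1,\gamma}=\omega-\sum_{j}\partial_{\bar z_{j}}v_{j}$.

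For the algebraic decomposition, the key observation is that on the neighborhood $U$ of $b\Omega$ where $\delta=d_{b\Omega}$ one has $\sum_{j}|\partial_{\bar z_{j}}\delta|^{2}=\tfrac{1}{4}|\nabla\delta|^{2}=\tfrac{1}{4}$. Choosing $\chi\in\mathcal{C}^{\infty}_{c}(U)$ identically $1$ on a smaller neighborhood of $b\Omega$, I set $v_{j}:=4\chi\gamma\,\overline{\partial_{\bar z_{j}}\delta}\in\mathcal{C}^{\infty}(\overline{\Omega})$; a direct computation gives $\sum_{j}v_{j}\partial_{\bar z_{j}}\delta=\chi\gamma$, so the remainder $(1-\chi)\gamma$ vanishes in a neighborhood of $b\Omega$. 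Provided the extension is chosen with $\delta>0$ on $\Omega$ (as is customary), the quotient $\omega:=(1-\chi)\gamma/\delta$ is then smooth on $\overline{\Omega}$.

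The main technical obstacle is the integration-by-parts identity itself: for every $\alpha\in\mathcal{C}^{1}(\overline{\Omega})$ with $\alpha|_{b\Omega}=0$ and every $\eta\in L^{1}(\Omega)\cap\mathcal{O}(\Omega)$, one needs
\[
  \int_{\Omega}\eta\,\partial_{\bar z_{j}}\alpha\,dV=0.
\]
I would prove this by exhausting $\Omega$ by $\Omega_{\epsilon}:=\{\delta>\epsilon\}$ for small $\epsilon$ (so the level sets sit inside the region where $\delta=d_{b\Omega}$), applying Stokes' theorem on $\Omega_{\epsilon}$, and bounding the resulting boundary integral over $b\Omega_{\epsilon}$ by $C\epsilon\,g(\epsilon)$ with $g(\epsilon):=\int_{b\Omega_{\epsilon}}|\eta|\,dS$, using $|\alpha|\le C\epsilon$ there. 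The coarea formula together with $\eta\in L^{1}(\Omega)$ gives $g\in L^{1}(0,\delta_{0})$, which forces $\liminf_{\epsilon\to 0}\epsilon\,g(\epsilon)=0$ (any positive lower bound would yield $g\gtrsim 1/\epsilon$ near $0$, contradicting integrability). Selecting a subsequence $\epsilon_{k}\to 0$ realizing this liminf kills the boundary term, while the interior integral tends to $\int_{\Omega}\eta\,\partial_{\bar z_{j}}\alpha\,dV$ by dominated convergence. This is precisely where the hypothesis that $\eta$ lies only in $L^{1}$ is felt: no trace on $b\Omega$ is available, and the coarea-plus-liminf mechanism is exactly what is needed to compensate.
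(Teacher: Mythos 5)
Your proof is correct, but it takes a genuinely different route from the paper's. The paper writes $\gamma=N(\delta)\gamma$ near $b\Omega$ with $N$ the real unit normal field, integrates by parts to move $N$ off $\delta$, and then must dispose of the resulting $N\eta$ term by converting it---via the Cauchy--Riemann equations---into a tangential derivative $iT\eta$ and integrating by parts a second time; its boundary terms on $b\Omega_{\epsilon}$ are handled by pulling out the exact factor $\delta|_{b\Omega_{\epsilon}}=\epsilon$ against integrals that are shown to be uniformly bounded. You instead work with the antiholomorphic derivatives from the outset: since $\sum_{j}|\partial_{\bar z_{j}}\delta|^{2}=\tfrac14|\nabla\delta|^{2}=\tfrac14$ near $b\Omega$, the identity $\chi\gamma=\sum_{j}v_{j}\,\partial_{\bar z_{j}}\delta$ holds exactly with $v_{j}=4\chi\gamma\,\overline{\partial_{\bar z_{j}}\delta}$, and because $\partial_{\bar z_{j}}\eta=0$ a single application of your lemma ($\int_{\Omega}\eta\,\partial_{\bar z_{j}}\alpha\,dV=0$ whenever $\alpha\in\mathcal{C}^{1}(\overline{\Omega})$ vanishes on $b\Omega$) finishes the argument---no derivative ever lands on $\eta$, so the paper's $N\to T$ conversion becomes unnecessary. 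Your treatment of the delicate $L^{1}$ boundary term is also different but sound: the coarea formula (using $|\nabla d_{b\Omega}|=1$) shows $\epsilon\mapsto\int_{b\Omega_{\epsilon}}|\eta|\,dS$ is integrable near $0$, so $\liminf_{\epsilon\to 0}\epsilon\int_{b\Omega_{\epsilon}}|\eta|\,dS=0$, and passing to a subsequence suffices because the interior integrals converge by dominated convergence. The one point you should make fully explicit---though the paper's own term $\mathrm{I}_{1}$ has the identical issue---is that $\delta$ must be nonvanishing on $\mathrm{supp}(1-\chi)\cap\overline{\Omega}$ for $(1-\chi)\gamma/\delta$ to be smooth; this is arranged by choosing the extension $\delta$ positive in the interior, as you note. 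The net effect of your organization is a shorter proof built around a cleanly isolated, reusable integration-by-parts lemma, whereas the paper's $N$-based computation is set up so that the same calculation iterates verbatim in the induction step for higher $k$.
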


\begin{proof}[Proof of Lemma \ref{L:basecase}]
  After possibly shrinking the neighborhood $U$ of $b\Omega$ described in subsection \ref{SS:Setting}, it may be assumed that $\delta$ equals $d_{b\Omega}$ on $\overline{\Omega}\cap U$. 
  Now choose a function $\zeta\in\mathcal{C}_{c}^{\infty}(U)$ such that  $\zeta\equiv 1$ in some 
  neighborhood $U'\Subset U$ of $b\Omega$. Then
  \begin{align*}
    \int_{\Omega}\eta \gamma \,dV=\int_{\Omega}\zeta \eta \gamma\,dV+\underbrace{\int_{\Omega}(1-\zeta)\eta \gamma\,dV}_{=:\text{I}_{1}}.
  \end{align*}
  Note that the term
  \begin{align*}
    \text{I}_{1}=\int_{\Omega}\delta\left( \frac{1-\zeta}{\delta}\cdot \gamma\right)\eta\;dV
  \end{align*}
 is of the shape as the claimed  right hand side of \eqref{E:basecase} since 
 $1-\zeta\equiv 0$ in the neighborhood $U'$ of $b\Omega$. The fact that $N(\delta)=N(d_{b\Omega})=1$ on $\Omega\cap U$ gives
  \begin{align}\label{E:Stokes1}
    \int_{\Omega}\eta \gamma \,dV=\int_{\Omega}N(\delta)\zeta\eta \gamma\,dV+\text{I}_{1}.
  \end{align}
  The intermediate goal is to peel $N$ off $\delta$ by an integration by parts argument so that $\delta$  becomes a multiplicative factor.  For that, notice first that the product rule yields
\begin{align*}  
  N(\delta)\zeta\eta \gamma&=N(\delta\zeta\eta \gamma)-\delta N(\zeta\eta \gamma)\\
  &\stackrel{\eqref{E:normalvfield}}{=}\sum_{j=1}^{2n}\frac{\partial}{\partial x_{j}}\left(\delta_{x_{j}}\delta\zeta\eta \gamma\right)
  -\delta(\triangle\delta)\zeta\eta \gamma-\delta N(\zeta\eta \gamma).
\end{align*}  
Integrating over $\Omega$ gives
\begin{align}\label{E:Stokes2}
  \int_{\Omega} N(\delta)\zeta\eta \gamma\,dV
  =-\int_{\Omega}\delta N(\zeta\eta \gamma)\,dV&-\int_{\Omega}\delta(\triangle\delta)\zeta\eta \gamma\,dV\\
  &+\int_{\Omega}d\Bigl(\delta\zeta\eta \gamma\sum_{j=1}^{2n}(-1)^{j+1}\delta_{x_{j}}\,d\widehat{x_{j}} \Bigr),\notag
\end{align}
where $d\widehat{x_{j}}=dx_{1}\wedge\ldots \wedge dx_{j-1}\wedge dx_{j+1}\wedge\ldots\wedge dx_{2n}$.
If $\eta$ was smooth up to $b\Omega$, then an application of Stokes' Theorem and the fact that $\delta=0$ on $b\Omega$ would imply that the last term on the right hand side of \eqref{E:Stokes1} is zero. However, $\eta\in L^{1}(\Omega)\cap\mathcal{O}(\Omega)$, and hence some extra care has to be taken.  Let $\Omega_{\epsilon}=\{z\in\Omega:d_{b\Omega}(z)>\epsilon \}$ for $\epsilon>0$  small. Since both $\eta$ and $\gamma$ are smooth in $\Omega$, and therefore on $\overline{\Omega}_{\epsilon}$, Stokes' theorem may be used as follows:
\begin{align*}
 \int_{\Omega_{\epsilon}}d\Bigl(\delta\zeta\eta \gamma\sum_{j=1}^{2n}(-1)^{j+1}\delta_{x_{j}}\,d\widehat{x_{j}} \Bigr)
  &=\epsilon\int_{b\Omega_{\epsilon}}\zeta\eta \gamma\sum_{j=1}^{2n}(-1)^{j+1}\delta_{x_{j}}\,d\widehat{x_{j}}\\
  &=\epsilon  \int_{\Omega_{\epsilon}}d\Bigl(\zeta\eta \gamma\sum_{j=1}^{2n}(-1)^{j+1}\delta_{x_{j}}\,d\widehat{x_{j}} \Bigr).
\end{align*}   
Therefore,
 \begin{align*}  
  \int_{\Omega_{\epsilon}}d\Bigl(\delta\zeta\eta \gamma\sum_{j=1}^{2n}(-1)^{j+1}\delta_{x_{j}}\,d\widehat{x_{j}} \Bigr)
   &=\epsilon\underbrace{\int_{\Omega_{\epsilon}}N(\zeta\gamma)\eta\,dV}_{=:\text{I}_{2}(\epsilon)}+\epsilon
   \underbrace{\int_{\Omega_{\epsilon}}\zeta\gamma N(\eta)\,dV}_{=:\text{I}_{3}(\epsilon)}.
\end{align*} 
Since $\eta\in L^{1}(\Omega)$ and both $\zeta$ and $\gamma$ are smooth up to $b\Omega$, it follows that $\text{I}_{2}(\epsilon)$ is uniformly bounded in $\epsilon$. Therefore 
$\epsilon \text{I}_{2}(\epsilon)$ approaches $0$ as $\epsilon$ goes to $0^{+}$. To understand the behavior of the term 
$\text{I}_{3}(\epsilon)$, the holomorphicity of  $\eta$ will be used to transform $N\eta$ into a tangential derivative of $\eta$. For that define
\begin{align*}
  T:=\sum_{j=1}^{n}\delta_{x_{2j}}\frac{\partial}{\partial x_{2j-1}}-\delta_{x_{2j-1}}\frac{\partial}{\partial x_{2j}}.
\end{align*}
Then $T$ is a smooth vector field on $\overline{\Omega}$. Moreover, $T$ is tangential to the level sets of $\delta$ 
as  $T(\delta)\equiv 0$ holds. Additionally, the holomorphicity of $\eta$ gives
\begin{align*}
  N\eta=\sum_{j=1}^{n}\left(\delta_{x_{2j-1}}\eta_{x_{2j-1}}+\delta_{x_{2j}}\eta_{x_{2j}}\right)
  =i\sum_{j=1}^{n}\left(-\delta_{x_{2j-1}}\eta_{x_{2j}}+\delta_{x_{2j}}\eta_{x_{2j-1}}\right)=iT\eta.
\end{align*}
Hence
\begin{align}\label{E:Stokes4}
  \text{I}_{3}(\epsilon)=i\int_{\Omega_{\epsilon}}\zeta\gamma T(\eta)\,dV
  =i\int_{\Omega_{\epsilon}}T(\zeta\gamma\eta)\,dV-i\int_{\Omega_{\epsilon}}T(\zeta\gamma)\eta\,dV.
\end{align}
The last term on the right hand side of \eqref{E:Stokes4} is uniformly bounded in $\epsilon$ since $\eta\in L^{1}(\Omega)$ and $\zeta, \gamma\in\mathcal{C}^{\infty}(\overline{\Omega})$.
Moreover, the second to last term on the right hand side actually is zero. This follows from yet another application of Stokes' theorem and the facts that $T$ is tangential to $b\Omega_{\epsilon}$, self-adjoint and $\eta, \gamma, \zeta\in\mathcal{C}^{\infty}(\Omega)$. Therefore, $\text{I}_{3}(\epsilon)$ goes to $0$ as $\epsilon\to 0^{+}$. This concludes the proof of
 the last term on the right hand side of \eqref{E:Stokes2} being  zero. 
 
 Combining the two  identities  \eqref{E:Stokes1} and \eqref{E:Stokes2} then gives
\begin{align*}
   \int_{\Omega}\eta \gamma \,dV&=-\int_{\Omega}\delta N(\zeta\eta \gamma)\,dV
   -\int_{\Omega}\delta(\triangle\delta)\zeta\eta \gamma\,dV+\text{I}_{1}\\
   &=-\int_{\Omega}\delta N(\eta)\zeta \gamma\,dV\underbrace{-\int_{\Omega}\delta\eta\left(N(\zeta \gamma)+(\triangle\delta)\zeta \gamma\right)\,dV}_{=:\text{I}_{4}}
   +\text{I}_{1}.
\end{align*}
Note that the term $\text{I}_{4}$ is of the shape as the claimed right hand side of \eqref{E:basecase}.  So the intermediate goal of peeling $N$ off $\delta$ has been achieved at the cost of introducing an $N$-derivative of $\eta$. However, 
the holomorphicity of $\eta$ lets us repeat the trick leading up to \eqref{E:Stokes4} to obtain
\begin{align}\label{E:Stokes5}
  \int_{\Omega}\eta \gamma \,dV&=-i\int_{\Omega}\delta T(\eta)\zeta \gamma\,dV+\text{I}_{1}+\text{I}_{4}\notag\\
  &=-i\int_{\Omega}T(\delta \eta\zeta \gamma)\,dV+\underbrace{i\int_{\Omega}\delta\eta T(\zeta \gamma)\,dV}_{=:\text{I}_{5}}+\text{I}_{1}+\text{I}_{4},
\end{align}
where $\text{I}_{5}$ is of the right shape with respect to the claimed \eqref{E:basecase}. Also, arguments analogously to the ones following \eqref{E:Stokes4} result in the first term on the right hand side of \eqref{E:Stokes5} being equal to zero.

 Hence it is shown that
\begin{align*}
  \int_{\Omega}\eta \gamma\,dV=\text{I}_{1}+\text{I}_{4}+\text{I}_{5},
\end{align*}
i.e., \eqref{E:basecase} holds with $$\omega_{1,\gamma}=\frac{1-\zeta}{\delta}\gamma-\left(N(\zeta \gamma)+(\triangle\delta)\zeta \gamma-iT(\zeta \gamma)\right).$$
 \end{proof}
 
 \subsection{Proof of Theorem \ref{T:main}} \label{SS:proofofmain}
 The proof is done by induction on $k$. The case $k=1$ is Lemma \ref{L:basecase} with $\gamma=g$.  
 Suppose now that for some $k\in\mathbb{N}$ there exists a function $\omega_{k,g}\in\mathcal{C}^{\infty}(\overline{\Omega})$ such that
 \begin{align}\label{E:indhypo}
   \int_{\Omega}\eta\cdot g\,dV=\int_{\Omega}\delta^{k}\cdot\omega_{k,g}\cdot\eta\,dV\sjump\qquad\forall\sjump\eta\in L^{1}(\Omega)\cap\mathcal{O}(\Omega)
 \end{align}
 holds.
  Let the neighborhood $U$ of $b\Omega$ and the cut-off function $\zeta$ be given as in the proof of Lemma \ref{L:basecase} and proceed analogously to the arguments leading up to \eqref{E:Stokes1}. That is,  
 \begin{align*}
   \int_{\Omega}\eta g\,dV\stackrel{\eqref{E:indhypo}}{=}\int_{\Omega}\delta^{k}\omega_{k,g}\eta\,dV
   =\int_{\Omega}\delta^{k}\zeta\eta\omega_{k,g}\,dV+\int_{\Omega}\delta^{k}(1-\zeta)\omega_{k,g}\eta\,dV.
\end{align*}   
Moreover,  since $N(\delta)\equiv 1$ on $\Omega\cap U$, it follows that
 \begin{align*}  
    \int_{\Omega}\eta g\,dV=\frac{1}{k+1}\int_{\Omega}N(\delta^{k+1})\zeta\omega_{k,g}\eta\,dV+
    \int_{\Omega}\delta^{k+1}\frac{1-\zeta}{\delta}\omega_{k,g}\eta\,dV.
 \end{align*}
 Now repeat the arguments starting from \eqref{E:Stokes1} with $\delta^{k+1}$ in place of $\delta$ and $\omega_{k,g}/(k+1)$ in place of $\gamma$ there to obtain
 \begin{align*}
   \int_{\Omega} \eta\cdot g\;dV=\int_{\Omega}\delta^{k+1}\cdot\omega_{k+1,g}\cdot\eta\,dV
 \end{align*}
 with
 \begin{align*}
   \omega_{k+1,g}=\frac{1-\zeta}{\delta}\omega_{k,g}-\frac{1}{k+1}\left(N(\zeta\omega_{k})+(\triangle\delta)\omega_{k,g}-iT(\zeta\omega_{k,g}) \right).
 \end{align*}
\qed
 
 \section{Proof of Corollaries \ref{C:smoothing} and \ref{C:extending}}\label{S:Corollaries}
 \subsection{Setting}\label{SS:Settingagain}
   Let $\Omega\Subset\mathbb{C}^{n}$ be a domain with $\mathcal{C}^{\infty}$-boundary. Define, as usual,
 $$L^{2}(\Omega)=\bigl\{f:\Omega\longrightarrow\mathbb{C}: f \text{\,measurable}, \int_{\Omega}|f|^{2}\,dV<\infty\bigr\}.$$  Write $(.,.)$ for the usual Hermitian $L^{2}$-inner product, i.e., $$(f,g)=\int_{\Omega}f\cdot \overline{g}\,dV$$ for $f,g\in L^{2}(\Omega)$.
 Denote by $\|.\|$ the induced norm. For a multi-index $\alpha=(\alpha_{1},\ldots,\alpha_{2n})\in\mathbb{N}_{0}^{2n}$ of length
 $|\alpha|=\sum_{j=1}^{2n}\alpha_{j}$ define the differential operator 
 \begin{align*}
   D^{\alpha}=\frac{\partial^{|\alpha|}}{\partial x_{1}^{\alpha_{1}}\ldots\partial x_{2n}^{\alpha_{2n}}}.
 \end{align*}
 The $L^{2}$-Sobolev space, $H^{k}(\Omega)$, of order $k\in\mathbb{N}$ is defined as
 $$H^{k}(\Omega)=\left\{f\in L^{2}(\Omega): D^{\alpha}f\in L^{2}(\Omega)\,\,\text{for all\,\,}\alpha\,\,\text{with\,\,}|\alpha|\leq k\right\};$$
 here $D^{\alpha}f$ for $f\in L^{2}(\Omega)$ is understood in the sense of distributions. $H^{k}(\Omega)$ is equipped with the inner product
 \begin{align*}
   \left(f,g\right)_{k}:=\sum_{|\alpha|\leq k}\left(D^{\alpha}f,D^{\alpha}g\right)\qquad\text{for}\sjump f,g\in H^{k}(\Omega),
 \end{align*}
 and is in fact a Hilbert space with the induced norm  $\|.\|_{k}$.
 Set $A^{k}(\Omega)$ to be the subspace $H^{k}(\Omega)\cap\mathcal{O}(\Omega)$, and denote by $\overline{A^{k}(\Omega)}$ the space of functions whose complex conjugates belong to $A^{k}(\Omega)$.
 
 Let $H_{0}^{k}(\Omega)$ be the closure of $\mathcal{C}^{\infty}_{c}(\Omega)$ with respect to $\|.\|_{k}$. Denote by $H^{-k}(\Omega)$ the dual space of $H^{k}_{0}(\Omega)$. The space $H^{-k}(\Omega)$ is then endowed with the operator norm, i.e., if $f\in H^{-k}(\Omega)$ then
 \begin{align*}
   \|f\|_{-k}=\sup\left\{\left|\left(f,g\right)\right|: g\in H_{0}^{k}(\Omega), \|g\|_{k}\leq 1 \right\}.
 \end{align*}
 For functions in  $A^{0}(\Omega)$ this norm reduces to an $L^{2}$-norm weighted with the appropriate power of the distance-to-the-boundary function. That is, there exists a constant $c>0$ such that
 \begin{align}\label{E:equivnorm}
    \frac{1}{c}\|h\|_{-k}\leq\|\delta^{k}h\|\leq c\|h\|_{-k}\sjump\qquad\forall\sjump h\in A^{0}(\Omega).
 \end{align}
 Clearly, \eqref{E:equivnorm} also holds for all $h\in\overline{A^{0}(\Omega)}$.
 Write $\overline{A^{-k}(\Omega)}$ for the closure of $\overline{A^{0}(\Omega)}$ with respect to $\|.\|_{-k}$.
 
 \medskip
 
 \subsection{Proof of Corollary \ref{C:smoothing}}\label{SS:smoothing}
   Suppose \eqref{E:regassumption} holds, i.e., for a given pair $(k_{1},k_{2})\in\mathbb{N}_{0}^{2}$ there exists a constant $C>0$ 
   such that
   \begin{align*}
     \left\|Bf\right\|_{k_{2}}\leq C\left\|f\right\|_{k_{1}}\qquad\sjump\forall\sjump f\in H^{k_{1}}(\Omega).
   \end{align*}
    Proposition 2.3 in \cite{HMS} states that under this regularity assumption on $B$ there exists a constant $c_{1}>0$ such that
   \begin{align}\label{E:duality}
     \|f\|_{k_{2}}\leq c_{1}\sup\left\{\left|\left(f,h\right) \right|: h\in A^{k_{2}}(\Omega), \|h\|_{-k_{1}} \right\}
   \end{align}
   for all $f\in A^{0}(\Omega)$. The case $k_{1}=0=k_{2}$ is not contained in \cite{HMS}, however it is obtained easily from 
   \begin{align*}
     \|f\|=\|Bf\|&=\sup\left\{\left|\left(Bf,g\right)\right|: g\in L^{2}(\Omega), \|g\|\leq 1 \right\}\\
     &=\sup\left\{\left|\left(f,Bg\right)\right|: g\in L^{2}(\Omega), \|g\|\leq 1 \right\}
   \end{align*}
   and the fact that $B$ is bounded in $L^{2}(\Omega)$.
   
   \medskip
   
   Let $k\in\mathbb{N}$ and $g\in\mathcal{C}^{\infty}(\overline{\Omega})$ be given. It then follows from \eqref{E:duality} that
   \begin{align*}
     \left\|B(\mu g) \right\|_{k_{2}}&\leq c_{1}\sup\left\{\left|\left(B(\mu g),h \right) \right| : h\in A^{k_{2}}(\Omega), \|h\|_{-k_{1}}\leq 1\right\}\\
     &= c_{1}\sup\left\{\left|\left(\mu g,h \right) \right| : h\in A^{k_{2}}(\Omega), \|h\|_{-k_{1}}\leq 1\right\}
   \end{align*}
   for all  $\mu\in \overline{A^{0}(\Omega)}$.
   Observe first that $\overline{\mu}h$ is holomorphic in $\Omega$, then notice that H\"older's inequality implies that 
   $\overline{\mu}h\in L^{1}(\Omega)$. Hence Theorem \ref{T:main} is applicable  here. In particular, for any  $k\in\mathbb{N}$ there exists a function $\omega_{k+k_{1},g}\in\mathcal{C}^{\infty}(\overline{\Omega})$ such that
   \begin{align*}
     \left(\mu g,h \right)=\left(\delta^{k}\cdot\omega_{k+k_{1},g}\cdot\mu ,\delta^{k_{1}}\cdot h \right).
   \end{align*}
   Applying (the right side of) \eqref{E:equivnorm} twice after using the Cauchy--Schwarz inequality then gives
   \begin{align*}
      \left| \left(\mu g,h \right)\right|\leq \bigl\|\delta^{k}\omega_{k+k_{1},g}\mu \bigr\|\cdot\bigl\|\delta^{k_{1}}h \bigr\|
      \leq c_{2}\|\mu\|_{-k}\cdot\|h\|_{-k_{1}}.
   \end{align*}
   Here $c_{2}>0$ is a constant depending on $\omega_{k+k_{1},g}$ and on the constant $c$ in \eqref{E:equivnorm}. Thus
   \begin{align*}
     \left\| B(\mu g)\right\|_{k_{2}}\leq c_{1}c_{2}\|\mu\|_{-k}.
   \end{align*}
\qed

\subsection{Proof of Corollary \ref{C:extending}}\label{SS:extending}
  Let $\mu\in\overline{A^{-k}(\Omega)}$. Then by definition there exists a sequence $\{\mu_{j}\}_{j}\in\overline{A^{0}(\Omega)}$ which converges to $\mu$ in $\|.\|_{-k}$. Moreover, it follows from \eqref{E:smoothing} that
\begin{align*}
  \left\|B(\mu_{j}g)-B(\mu_{\ell}g)\right\|_{k_{2}}=\left\|B\left((\mu_{j}-\mu_{\ell})g\right)\right\|_{k_{2}}
  \leq\widetilde{C}\|\mu_{j}-\mu_{\ell}\|_{-k}.
\end{align*}
Therefore $\{B(\mu_{j}g)\}_{j}$ is a Cauchy sequence with respect to $\|.\|_{k_{2}}$ and hence has a limit in $H^{k_{2}}(\Omega)$. Thus an extension, $\widetilde{B}$, of the Bergman projection $B$ may be defined by setting $\widetilde{B}(\mu g)$ to be the limit of 
$\{B(\mu_{j}g)\}_{j}$ in $H^{k_{2}}(\Omega)$.
It then follows that for any $\epsilon>0$ there exists a $j_{0}\in\mathbb{N}$ such that for all $j\geq j_{0}$
\begin{align*}
  \bigl\|\widetilde{B}(\mu g) \bigr\|_{k_{2}}\leq
  \bigl\|\widetilde{B}(\mu g)-B(\mu_{j}g) \bigr\|_{k_{2}}&+\bigl\|B(\mu_{j} g) \bigr\|_{k_{2}}\\
 & \leq\epsilon+\widetilde{C}\|\mu_{j}\|_{-k}\\
  &\leq \epsilon+\widetilde{C}\left(\|\mu-\mu_{j}\|_{-k}+\|\mu\|_{-k} \right)\\
  &\leq\epsilon(1+\widetilde{C})+\widetilde{C}\|\mu\|_{-k}.
\end{align*}
holds. Therefore, $\widetilde{B}$ is a bounded in $\|.\|_{k_{2}}$.
\qed

\end{document}